\documentclass[final,5p,twocolumn]{elsarticle}%

\journal{Systems \& Control Letters}

\usepackage{graphicx}      
\usepackage{natbib}        

\usepackage{bookmark}

\usepackage{epsfig}

\usepackage{amsmath}%

\graphicspath{{fig/}}

\usepackage{amsfonts}
\usepackage{amssymb}

\usepackage{tikz}
\usetikzlibrary{matrix,positioning,decorations.pathreplacing}

\usepackage{color}

\usepackage{enumitem}

\usepackage{ifthen}

\newenvironment{proof}{{\bf Proof.}}{\hfill \hspace*{1pt}\hfill $\Box$}

\makeatletter
\DeclareOldFontCommand{\rm}{\normalfont\rmfamily}{\mathrm}
\DeclareOldFontCommand{\sf}{\normalfont\sffamily}{\mathsf}
\DeclareOldFontCommand{\tt}{\normalfont\ttfamily}{\mathtt}
\DeclareOldFontCommand{\bf}{\normalfont\bfseries}{\mathbf}
\DeclareOldFontCommand{\it}{\normalfont\itshape}{\mathit}
\DeclareOldFontCommand{\sl}{\normalfont\slshape}{\@nomath\sl}
\DeclareOldFontCommand{\sc}{\normalfont\scshape}{\@nomath\sc}
\makeatother

\usepackage{csquotes}  
\newcommand\q{\enquote}

\usepackage{physics}   




%
%


\newcommand{\ccat}[3]{{#1\, \underset{#3}{\lozenge}\,{#2}}}

\newcommand{\tm}{\times}

%

%
%

%






\newcommand \Limsup {\mathop{\overline{\lim}}}


\newcommand \N   {\mathbb{N}}
\newcommand \R   {\mathbb{R}}

%



\newcommand \K   {\mathcal{K}}
\newcommand \Kinf{\mathcal{K_\infty}}

\newcommand{\Uc}{\ensuremath{\mathcal{U}}}

\newcommand{\Dc}{\ensuremath{\mathcal{D}}}

\newcommand{\vertiii}[1]{{\left\vert\kern-0.25ex\left\vert\kern-0.25ex\left\vert #1 
    \right\vert\kern-0.25ex\right\vert\kern-0.25ex\right\vert}}





\newcommand \qrq   {\quad\Rightarrow\quad}

\newcommand \srs   {\ \ \Rightarrow\ \ }

\newcommand \Iff   {\Leftrightarrow}





\newcommand{\normt}[1]{{\left\vert\kern-0.25ex\left\vert\kern-0.25ex\left\vert #1 
		\right\vert\kern-0.25ex\right\vert\kern-0.25ex\right\vert}}

















\newif\ifMath					
\newif\ifEngi					

\newif\ifAndo              
													
\newif\ifExercises					
\newif\ifSolutions          
\newif\ifGerman							
\newif\ifEnglish						

\newif\ifnothabil						

\newif\ifFuture							

\newif\ifConf                    
\newif\ifJournal								 

\newif\ifNOTFORBOOK
\newif\ifFullVersion
\newif\ifExludedDueToSpaceReasons

\usepackage{xifthen}

\newcommand{\einsnorm}[2]{\ensuremath{
    \!\!\;\!\!\!\;
    \left\bracevert\!\!\!\!\!\left\bracevert
    \!
		\ifthenelse{\isempty{#2}}{#1}{#1(#2)}
    \!
      \right\bracevert\!\!\!\!\!\right\bracevert
    \!\!\;\!\!\!\;
  }}






\usepackage{xcolor}
\definecolor{blond}{rgb}{0.98, 0.94, 0.75}
	
\newlength\mytemplen
\newsavebox\mytempbox

\makeatletter
\newcommand\mybluebox{%
    \@ifnextchar[
       {\@mybluebox}%
       {\@mybluebox[0pt]}}

\def\@mybluebox[#1]{%
    \@ifnextchar[
       {\@@mybluebox[#1]}%
       {\@@mybluebox[#1][0pt]}}

\def\@@mybluebox[#1][#2]#3{
    \sbox\mytempbox{#3}%
    \mytemplen\ht\mytempbox
    \advance\mytemplen #1\relax
    \ht\mytempbox\mytemplen
    \mytemplen\dp\mytempbox
    \advance\mytemplen #2\relax
    \dp\mytempbox\mytemplen
    \colorbox{blond}{\hspace{1em}\usebox{\mytempbox}\hspace{1em}}}

\makeatother


\makeatletter
\let\origd=\d
\renewcommand*\d{
  \relax\ifmmode
    \mathrm{d}%
  \else
    \expandafter\origd
  \fi
}\makeatother

\usepackage{mathtools}



\makeatletter 
\newcommand{\pushright}[1]{\ifmeasuring@#1\else\omit\hfill$\displaystyle#1$\fi\ignorespaces}
\newcommand{\pushleft}[1]{\ifmeasuring@#1\else\omit$\displaystyle#1$\hfill\fi\ignorespaces}
\makeatother 

\newcounter{syscounter}
\newenvironment{sysnum}{\begin{list}{($\Sigma{\arabic{syscounter}}$)}%
{\settowidth{\labelwidth}{($\Sigma4$)}
\settowidth{\leftmargin}{($\Sigma4$)~}%
\usecounter{syscounter}}}
{\end{list}}

\newcounter{WPcounter}

\newtheorem{theorem}{Theorem}[section]
\newtheorem{lemma}[theorem]{Lemma}
\newtheorem{proposition}[theorem]{Proposition}
\newtheorem{corollary}[theorem]{Corollary}
%
\newtheorem{definition}[theorem]{Definition}

\newtheorem{ass}{Assumption}[section]  

\newtheorem{nnremark}[theorem]{\bf Remark}
\newenvironment{remark}{\begin{nnremark} \rm }{\hfill \hspace*{1pt}\hfill $\lrcorner$\end{nnremark}}


\Andotrue       

\nothabilfalse

\Exercisesfalse

\Solutionsfalse 

\Germanfalse

\Englishtrue

\Futurefalse

\title{\LARGE Lyapunov criteria for robust forward completeness of distributed parameter systems}

\tnotetext[mytitlenote]{The work of A.~Mironchenko is supported by the German Research Foundation (DFG) through the grant MI 1886/2-2.}
 
\author[passauaddress]{Andrii~Mironchenko}
\address[passauaddress]{Faculty of Computer Science and Mathematics, University of Passau, Germany}





\begin{document}

\begin{abstract}
We show that the robust forward completeness for distributed parameter systems is equivalent to the existence of a corresponding Lyapunov function that increases at most exponentially along the trajectories. 
\end{abstract}

\begin{keyword}
Nonlinear systems, infinite-dimensional systems, forward completeness, reachability sets, Lyapunov methods
\end{keyword}

\maketitle

\section{Introduction}

A control system is called forward complete if for any initial condition $x$, and any input $u$, the corresponding trajectory $\phi(\cdot,x,u)$ is well-defined on the whole nonnegative time axis. If additionally, for any magnitude $r>0$ and any time $\tau>0$
\[
\sup_{\|x\| \leq r,\ u\in\Dc,\ t\in[0,\tau]} \|\phi(t,x,u)\| <+\infty,
\] 
where $\Dc$ is the space of admissible inputs, then a control system is said to be robustly forward complete (the concept is coined by \cite{Kar04}, but used implicitly at least since \cite{LSW96}).

\emph{Robust forward completeness (RFC)}, as well as a related concept of boundedness of reachability sets, are essential in many contexts. They were instrumental in deriving converse Lyapunov theorems for global asymptotic stability \cite{LSW96}. 
They help to establish regularity properties of the flow maps for (in)finite-dimensional nonlinear systems \cite[Theorem 1.40]{Mir23},  \cite[Section 3.5]{Mir23d}. 
Uniform global asymptotic stability for infinite-dimensional systems has been characterized in terms of uniform weak attractivity, local stability, and RFC property in \cite{MiW19a}. Criteria for input-to-state stability in terms of uniform limit property, local stability, and boundedness of reachability sets were proved for general nonlinear control systems in \cite{MiW18b}. These characterizations, in turn, paved the way for the development of non-coercive Lyapunov methods \cite{MiW19a, MiW18b, JMP20}, characterization of global asymptotic stability for retarded systems \cite{KPC22}, to name a few.

Sufficient conditions for the global existence of solutions for ordinary differential equations (ODEs) and other classes of control systems are a classic subject \cite{Win45, Had72, Jus67, Tan92, Iwa83}. 
For instance, Wintner's theorem \cite{Win45} shows that an ODE 
\[
\dot{x} = f(x)
\]
with locally Lipschitz $f$ has unique global solutions provided that $|f(x)| \leq L(|x|)$ with $L$ satisfying 
\[
\int_c^\infty\frac{1}{L(s)}ds = +\infty\quad \forall c>0.
\]
In particular, if $f$ is globally Lipschitz continuous or linearly bounded, the solutions for the above ODE exist globally, and the reachability sets are bounded (i.e., the system is RFC). This result can be extended to evolution equations in Banach spaces and other system classes, e.g., \cite[Theorem 3.3, p. 199]{Paz83}.

The analysis of necessary conditions for forward completeness is more recent.
Necessary and sufficient conditions of Lyapunov type for forward completeness of ODEs without inputs have been proposed in \cite{KaS67}. However, Lyapunov functions constructed in \cite{KaS67} are time-variant even for time-invariant ODEs.

In \cite{LSW96, AnS99} for systems 
\[
\dot{x} = f(x,u),
\]
with Lipschitz continuous (in both arguments) $f$, it was shown that: forward completeness, boundedness of reachability sets for ODEs with inputs, and the existence of a Lyapunov function that increases at most exponentially, are equivalent properties. 

For distributed parameter systems, the situation is more complex. Linear forward complete infinite-dimensional systems have always bounded reachability sets \cite[Proposition 2.5]{Wei89b}. However, nonlinear forward complete infinite-dimensional systems with Lipschitz continuous right-hand sides do not necessarily have bounded reachability sets, even for systems without inputs, as demonstrated in \cite[Example 2]{MiW18b}.
This fact indicates that the RFC property (establishing uniform bounds for solutions on finite time intervals) is a bridge between the pure well-posedness theory (that studies existence and uniqueness but does not care much about the bounds for solutions) and the stability theory (which is interested in establishing certain bounds for solutions for all nonnegative times, as well as their convergence).

In this work, we consider a broad class of control systems satisfying the so-called boundedness-implies-continuation property and having flows that are Lipschitz continuous on compact intervals.
We show that for this class of systems, \emph{robust forward completeness is equivalent to the existence of a Lyapunov function that increases at most exponentially along the trajectories.}

Our proof differs from that of \cite{AnS99}, where a closely related result was shown for ODE systems.
Namely, for ODEs with Lipschitz right-hand sides, local solutions exist both in a positive and a negative direction. This fact was used for the construction of \q{RFC Lyapunov functions} in \cite{AnS99}. At the same time, for the class of systems that we consider, the solutions backward in time do not necessarily exist, and if they do, then they do not need to be unique. 
To overcome this challenge, we propose a different proof scheme motivated by the converse Lyapunov results for the UGAS property, e.g., \cite[Theorem 4.2.1]{Hen81}.

\textbf{Notation.} 
We write $\N$, $\R$, and $\R_+$ for the sets of positive integers, real numbers, and nonnegative real numbers, respectively.
We say that $\gamma:\R_+\to\R_+$ belongs to the class $\K$ if $\gamma$ is continuous, $\gamma(0)=0$, and $\gamma$ is strictly increasing. 
$\gamma\in\Kinf$ if $\gamma\in\K$ and it is unbounded.

For a normed vector space $S$ we denote the open ball of radius $r$ around $0\in S$ by $B_{r,S}:=\{u\in S:\|u\|_{S}<r\}$. 
If $S$ is the state space $X$, then we denote for short $B_r:=B_{r,X}$.

\section{General class of systems}

We start with a general definition of a control system.
\index{control system}
\begin{definition}
\label{Steurungssystem}
Consider the triple $\Sigma=(X,\Uc,\phi)$ consisting of 
\index{state space}
\index{space of input values}
\index{input space}
\begin{enumerate}[label=(\roman*)]  
    \item A normed vector space $(X,\|\cdot\|_X)$, called the \emph{state space}, endowed with the norm $\|\cdot\|_X$.
    \item A normed vector \emph{space of inputs} $\Uc \subset \{u:\R_+ \to U\}$          
endowed with a norm $\|\cdot\|_{\Uc}$, where $U$ is a normed vector \emph{space of input values}.
We assume that the following axiom holds:
                    
\emph{The axiom of shift invariance}: for all $u \in \Uc$ and all $\tau\geq0$ the time
shift $u(\cdot + \tau)$ belongs to $\Uc$ with \mbox{$\|u\|_\Uc \geq \|u(\cdot + \tau)\|_\Uc$}.


    \item A map $\phi:D_{\phi} \to X$, $D_{\phi}\subseteq \R_+ \times X \times \Uc$ (called \emph{transition map}), such that for all $(x,u)\in X \tm \Uc$ it holds that $D_{\phi} \cap \big(\R_+ \times \{(x,u)\}\big) = [0,t_m)\tm \{(x,u)\} \subset D_{\phi}$, for a certain $t_m=t_m(x,u)\in (0,+\infty]$.
		
		The corresponding interval $[0,t_m)$ is called the \emph{maximal domain of definition} of $t\mapsto \phi(t,x,u)$.
		
\end{enumerate}
The triple $\Sigma$ is called a \emph{(control) system}, if the following properties hold:
\index{property!identity}

\begin{sysnum}
    \item\label{axiom:Identity} \emph{The identity property:} for every $(x,u) \in X \times \Uc$
          it holds that $\phi(0, x,u)=x$.
\index{causality}
    \item \emph{Causality:} for every $(t,x,u) \in D_\phi$, for every $\tilde{u} \in \Uc$, such that $u(s) =
          \tilde{u}(s)$ for all $s \in [0,t]$ it holds that $[0,t]\tm \{(x,\tilde{u})\} \subset D_\phi$ and $\phi(t,x,u) = \phi(t,x,\tilde{u})$.
    \item \label{axiom:Continuity} \emph{Continuity:} for each $(x,u) \in X \times \Uc$ the map $t \mapsto \phi(t,x,u)$ is continuous on its maximal domain of definition.
\index{property!cocycle}
        \item \label{axiom:Cocycle} \emph{The cocycle property:} for all
                  $x \in X$, $u \in \Uc$, for all $t,h \geq 0$ so that $[0,t+h]\tm \{(x,u)\} \subset D_{\phi}$, we have
\[
\phi\big(h,\phi(t,x,u),u(t+\cdot)\big)=\phi(t+h,x,u).
\]
\end{sysnum}

\end{definition}

Definition~\ref{Steurungssystem} can be viewed as a direct generalization and a unification of the concepts of strongly continuous nonlinear semigroups with abstract linear control systems \cite{Wei89b}. 
This class of systems encompasses control systems generated by ODEs, switched systems, time-delay systems,
evolution partial differential equations, differential
equations in Banach spaces and many others \cite[Chapter 1]{KaJ11b}.

For a wide class of control systems, the boundedness of a solution implies the possibility of prolonging it to a larger interval, see \cite[Chapter 1]{KaJ11b}. Next, we formulate this property for abstract systems:
\begin{definition}
\label{def:BIC} 
\index{property!boundedness-implies-continuation}
\index{BIC}
We say that a system $\Sigma$ satisfies the \emph{boundedness-implies-continuation (BIC) property} if for each
$(x,u)\in X \tm \Uc$ such that the maximal existence time $t_m(x,u)$ is finite, and for all $M > 0$, there exists $t \in [0,t_m(x,u))$ with $\|\phi(t,x,u)\|_X>M$.
\end{definition}

Take any $R \in \R_+\cup\{\infty\}$, and assume that the inputs are restricted to the set
\begin{align}
\label{eq:disturbance-set}
\Dc:=\{u \in\Uc: \|u\|_\Uc \leq R\}.
\end{align}

\begin{definition}
\label{def:FC_Property} 
\index{forward completeness}
We say that a control system $\Sigma=(X,\Uc,\phi)$ is \emph{forward complete (for inputs in $\Dc$)}, if 
$\R_+ \tm X\tm\Dc \subset D_\phi$, that is for every $(x,u) \in X \times \Dc$ and for all $t \geq 0$ the value $\phi(t,x,u) \in X$ is well-defined.
\end{definition}

Forward completeness alone does not imply, in general, the existence of any uniform bounds on the trajectories emanating from bounded balls, even in the absence of inputs \cite[Example 2]{MiW18b}.
If the system does exhibit such a bound, it deserves a special name (the term was first introduced in \cite[Definition 2.2]{Kar04}, though implicitly it was used before, e.g., in \cite[Corollary 2.4]{AnS99}).
\begin{definition}
\label{def:RFC}
\index{robust forward completeness}
\index{RFC}
Consider a control system $\Sigma=(X,\Uc,\phi)$, and let $\Dc$ be the set of disturbances as defined by \eqref{eq:disturbance-set}.
We say that \emph{$\Sigma$ is robustly forward complete (RFC)} if $\Sigma$ is forward complete for inputs in $\Dc$, and for any $r>0$ and any $\tau>0$, it holds that 
\[
\sup\big\{
\|\phi(t,x,u)\|_X : \|x\|_X\leq r,\ u\in\Dc,\ t \in [0,\tau]\big\} < \infty.
\]
\end{definition}

For ODEs with Lipschitz continuous right-hand side, forward completeness is always robust provided that $R<\infty$, as shown in \cite[Proposition 2.5]{LSW96}. However, if $\Dc=\Uc$, robust forward completeness is, in general, essentially stronger than forward completeness, even for scalar systems.

In particular, the scalar system  $\dot{x} = x u$ with $X=\R$ and $\Dc=\Uc:=L^\infty(\R_+,\R)$ is forward complete, but it is not RFC. A simple example of a scalar RFC system is given by the following system with the same $X,\Uc,\Dc$:
\[
\dot{x} = \frac{1}{1+|u(t)|} x.
\]

\section{Criteria for robust forward completeness}
\label{sec:Criteria for robust forward completeness}

In this section, we derive Lyapunov criteria for RFC.

\subsection{Technical lemmas}

We need the following lemma (\cite[Lemma B.29]{Mir23}):
\begin{lemma}
\label{lem:SimpleIneqLemma}
Let $f,g:D \to \R_+$ be any functions for which $\sup_{d \in D}f(d)$ is finite.
Then
\begin{eqnarray}
\sup_{d \in D}f(d) - \sup_{d \in D}g(d) \leq \sup_{d \in D}(f(d) - g(d)).
\label{eq:SimpleIneqLemma}
\end{eqnarray}
\end{lemma}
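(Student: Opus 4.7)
The plan is to exploit the elementary pointwise decomposition $f(d)=g(d)+(f(d)-g(d))$. Fixing an arbitrary $d \in D$ and bounding the second summand by its supremum over $D$, I obtain
\[
f(d) \;\leq\; g(d) \;+\; \sup_{d'\in D}\bigl(f(d')-g(d')\bigr),
\]
a bound that is valid regardless of whether the supremum on the right is finite or equals $+\infty$, since it follows from the defining property of the supremum applied pointwise.

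Taking the supremum over $d\in D$ on both sides then gives
\[
\sup_{d\in D} f(d) \;\leq\; \sup_{d\in D} g(d) \;+\; \sup_{d\in D}\bigl(f(d)-g(d)\bigr).
\]
If $\sup_{d\in D} g(d)$ is finite, subtracting it from both sides immediately yields \eqref{eq:SimpleIneqLemma}. If instead $\sup_{d\in D} g(d)=+\infty$, then since $\sup_{d\in D} f(d)$ is finite by hypothesis, the left-hand side of \eqref{eq:SimpleIneqLemma} is $-\infty$ in extended-real arithmetic (equivalently, is dominated by every real number), so the inequality holds trivially. There is no real obstacle here; the only point requiring a moment's care is the case split on whether $\sup g$ is finite so that the subtraction on the left-hand side of \eqref{eq:SimpleIneqLemma} is meaningful, and it is handled in one line. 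The hypothesis that $\sup_{d\in D} f(d)$ is finite is used exactly to ensure that the quantity $\sup_{d\in D} f(d) - \sup_{d\in D} g(d)$ is unambiguously defined.
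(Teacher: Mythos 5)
Your argument is correct and is the standard one: the pointwise bound $f(d)\leq g(d)+\sup_{d'}(f(d')-g(d'))$ followed by taking suprema, with the finiteness of $\sup_D f$ used only to make the left-hand side of \eqref{eq:SimpleIneqLemma} well-defined. The paper states this lemma without proof, so there is nothing to compare against; your handling of the case $\sup_D g=+\infty$ is a reasonable extra precaution and nothing is missing.
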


Another technical ingredient that we will need is:
\begin{lemma}
\label{Converse_Lyapunov_Theorem_SimpleEstimate}
For any $k \in \N$, consider the function 
\begin{eqnarray}
G_k: r \mapsto \max\Big\{r-\frac{1}{k},0 \Big\}.
\label{eq:G_k}
\end{eqnarray}
Then
\begin{enumerate}[label = (\roman*)]
	\item $G_k$ is Lipschitz continuous with a unit Lipschitz constant, i.e., for all $r_1,r_2 \geq 0$, it holds that 
\begin{eqnarray}
\big|G_k(r_1) - G_k(r_2)\big| \leq |r_1-r_2|.
\label{eq:CLT_Simp_Est}
\end{eqnarray}
	\item For any $a\geq 1$ and any $k\in\N$, it holds that
\begin{eqnarray}
G_k(ar) \leq a G_k(r)+ \frac{a-1}{k},\quad r\geq 0.
\label{eq:G_k-subadditivity}
\end{eqnarray}
\end{enumerate}
\end{lemma}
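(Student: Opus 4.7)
For part (i), the plan is to observe that $G_k$ is the pointwise maximum of the two affine functions $r\mapsto r-\frac{1}{k}$ and $r\mapsto 0$, which are $1$-Lipschitz and $0$-Lipschitz respectively. Using the elementary inequality $|\max(a_1,b_1)-\max(a_2,b_2)|\leq \max(|a_1-a_2|,|b_1-b_2|)$ (proved by a direct case check on which term attains each maximum), the bound~\eqref{eq:CLT_Simp_Est} follows immediately with unit constant. Alternatively, one can simply distinguish the four cases according to whether $r_1,r_2$ lie above or below $1/k$ and verify the inequality by hand in each case; both approaches are routine.

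For part (ii), I would proceed by case analysis on the position of $r$ and $ar$ relative to the threshold $1/k$. Since $a\geq 1$ implies $ar\geq r$, only three cases arise: (a) $ar\leq 1/k$ (and hence $r\leq 1/k$); (b) $r\leq 1/k<ar$; (c) $1/k<r$. In case (a) the left-hand side of~\eqref{eq:G_k-subadditivity} vanishes while the right-hand side is nonnegative, so the inequality is trivial. In case (c), $G_k(ar)=ar-1/k$ and $aG_k(r)+(a-1)/k=a(r-1/k)+(a-1)/k=ar-1/k$, giving equality. The only case requiring a moment's attention is (b), where $G_k(ar)=ar-1/k$ and $G_k(r)=0$; here~\eqref{eq:G_k-subadditivity} reduces to $ar-1/k\leq (a-1)/k$, i.e.\ $ar\leq a/k$, which holds because $r\leq 1/k$.

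\textbf{Main obstacle.} There is essentially no obstacle: both parts are elementary consequences of the piecewise affine structure of $G_k$. The only point worth emphasising is the role of the correction term $(a-1)/k$ in~\eqref{eq:G_k-subadditivity}: it is precisely tailored to absorb the gap in case (b), where $G_k$ has already been \emph{activated} at the scaled argument $ar$ but not yet at $r$ itself. This is the reason a pure subadditivity estimate $G_k(ar)\leq aG_k(r)$ would fail, and it foreshadows how the correction will need to be controlled (for instance driven to zero as $k\to\infty$) in the converse-Lyapunov construction where this lemma will later be applied.
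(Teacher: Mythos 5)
Your proof is correct, and part (i) matches the paper's argument exactly ($G_k$ as the pointwise maximum of two functions that are $1$-Lipschitz). For part (ii) you verify the inequality by a three-case analysis on the position of $r$ and $ar$ relative to $1/k$, whereas the paper obtains it in one line by rewriting $ar-\frac{1}{k}=\big(ar-\frac{a}{k}\big)+\frac{a-1}{k}$ and using $\max\{x+c,0\}\leq\max\{x,0\}+c$ for $c\geq 0$ together with $\max\{ar-\frac{a}{k},0\}=aG_k(r)$; both routes are equally elementary and your remark about the role of the correction term $\frac{a-1}{k}$ (absorbing the regime where $G_k$ is activated at $ar$ but not at $r$) correctly identifies why plain subadditivity would fail.
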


\begin{proof}
(i) holds as each $G_k$ is the maximum of two Lipschitz continuous functions with Lipschitz constant at most 1.

(ii). For $a\ge 1$ we have that 
\begin{align*}
G_k(ar) 
&= \max\Big\{ar-\frac{1}{k},0 \Big\} \\
&= \max\Big\{ar-a\frac{1}{k} + \frac{a-1}{k},0 \Big\} \\
&\leq \max\Big\{ar-a\frac{1}{k} ,0 \Big\} + \frac{a-1}{k}\\
&= aG_k(r) + \frac{a-1}{k}.
\end{align*}
\end{proof}

The following lemma is taken from \cite[p.130]{KaJ11b} (where it was stated informally); see also \cite[Lemma A.18]{Mir23}, where a more general result is stated.
\begin{lemma}
\label{lem:KinfLipschitzLowerEstimate}
For any $\alpha \in \Kinf$, there exists $\rho \in \Kinf$ so that $\rho(s) \leq \alpha(s)$ for all $s \in \R_+$ and $\rho$ is globally Lipschitz with a unit Lipschitz constant, i.e. for any $s_1,s_2 \geq 0$ it holds that 
\begin{equation}
|\rho(s_1) - \rho(s_2)| \leq |s_1 - s_2|.
\label{eq:Lipschitz_Kinf}
\end{equation}
\end{lemma}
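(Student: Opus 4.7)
The plan is to construct $\rho$ as the \emph{$1$-Lipschitz (Pasch--Hausdorff) envelope} of $\alpha$, namely
\[
\rho(s) := \inf_{t \geq 0}\bigl\{\alpha(t) + |s - t|\bigr\}, \quad s \geq 0.
\]
This is the standard construction of the largest $1$-Lipschitz function dominated by $\alpha$ on $[0,\infty)$; since $\alpha(t)+|s-t|\to\infty$ as $t\to\infty$ while $t\mapsto\alpha(t)+|s-t|$ is continuous, the infimum is in fact attained on a compact set, which is convenient for the final step.

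I would then dispatch the easy properties. Taking $t=s$ yields the required bound $\rho(s)\leq\alpha(s)$. Taking $t=0$ gives $\rho(0)\leq\alpha(0)=0$, and since $\alpha(t)+|s-t|\geq 0$ we also have $\rho(0)\geq 0$, hence $\rho(0)=0$. The $1$-Lipschitz estimate \eqref{eq:Lipschitz_Kinf} follows by combining the trivial inequality $\alpha(t)+|s_2-t|\leq\alpha(t)+|s_1-t|+|s_2-s_1|$ with Lemma~\ref{lem:SimpleIneqLemma} (after taking infima) and symmetry. For unboundedness, given $M>0$ I would choose $T$ with $\alpha(T)>M$; then for $s>T+M$ and any $t\geq 0$, either $t\leq T$ (in which case $\alpha(t)+|s-t|\geq s-T>M$) or $t\geq T$ (in which case $\alpha(t)\geq\alpha(T)>M$), so $\rho(s)\geq M$, yielding $\rho(s)\to\infty$.

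The principal obstacle is \emph{strict} monotonicity, since the $1$-Lipschitz envelope of a merely non-decreasing function need only be non-decreasing in general. Here I would exploit strict monotonicity of $\alpha$ as follows. Fix $s_1<s_2$ and let $t^{\ast}\geq 0$ attain the infimum defining $\rho(s_2)$. First, observe that $t^{\ast}\leq s_2$: otherwise strict monotonicity of $\alpha$ would give $\rho(s_2)=\alpha(t^{\ast})+t^{\ast}-s_2>\alpha(s_2)$, contradicting $\rho(s_2)\leq\alpha(s_2)$. Now consider two subcases. If $t^{\ast}\leq s_1$, then plugging $t^{\ast}$ into the infimum for $\rho(s_1)$ yields
\[
\rho(s_1)\leq\alpha(t^{\ast})+s_1-t^{\ast}<\alpha(t^{\ast})+s_2-t^{\ast}=\rho(s_2).
\]
If instead $s_1<t^{\ast}\leq s_2$, rewriting $\alpha(t^{\ast})=\rho(s_2)-(s_2-t^{\ast})\leq\rho(s_2)$ and using strict monotonicity of $\alpha$ together with the bound $\rho(s_1)\leq\alpha(s_1)$ gives $\rho(s_1)\leq\alpha(s_1)<\alpha(t^{\ast})\leq\rho(s_2)$. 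In either case $\rho(s_1)<\rho(s_2)$. Since continuity is automatic from the $1$-Lipschitz property, we conclude $\rho\in\Kinf$, finishing the proof.
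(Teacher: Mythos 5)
Your proof is correct. Note that the paper does not actually prove this lemma: it is imported from the cited references (where it is stated informally or in greater generality), so there is no in-text argument to compare against, and your write-up supplies a self-contained proof. Your construction is the Pasch--Hausdorff ($1$-Lipschitz) envelope $\rho(s)=\inf_{t\ge 0}\{\alpha(t)+|s-t|\}$, the canonical largest $1$-Lipschitz minorant, and the routine verifications (minorization via $t=s$, $\rho(0)=0$, the Lipschitz bound, unboundedness) all go through; the only cosmetic quibble is that the Lipschitz step does not really need Lemma~\ref{lem:SimpleIneqLemma}, which is stated for suprema --- it follows directly by taking the infimum over $t$ in the pointwise inequality $\alpha(t)+|s_2-t|\le \alpha(t)+|s_1-t|+|s_2-s_1|$. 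The genuinely delicate point is strict monotonicity, since the $1$-Lipschitz envelope of a merely nondecreasing function is in general only nondecreasing, and you handle it properly: the infimum defining $\rho(s_2)$ is attained (the objective is continuous and tends to $+\infty$), the minimizer $t^{\ast}$ satisfies $t^{\ast}\le s_2$ because otherwise $\rho(s_2)>\alpha(s_2)$, and the two cases $t^{\ast}\le s_1$ and $s_1<t^{\ast}\le s_2$ each yield $\rho(s_1)<\rho(s_2)$ using strict monotonicity of $\alpha$. Typical textbook proofs of this lemma instead build $\rho$ by explicit piecewise-linear interpolation with slopes capped at $1$; your envelope construction buys a shorter, formula-based argument at the price of a slightly more involved strict-monotonicity check.
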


For a continuous function $y:\R \to \R$, let
the \emph{right upper Dini derivative} be defined by
$D^+y(t):=\Limsup\limits_{h \to +0}\frac{y(t+h)-y(t)}{h}$.

\begin{proposition}[Comparison principle]
\label{prop:ComparisonPrinciple-exp}
For any $y \in C(\R_+, \R_+)$ satisfying the differential inequality
\begin{eqnarray}
D^+y(t) \leq a y(t) + M \quad \forall t>0,
\label{eq:ComparisonPrinciple-exp}
\end{eqnarray}
with certain $a,M>0$, it holds that
\begin{eqnarray}
y(t) \leq y(0)e^{at} + \frac{M}{a}(e^{at}-1) \quad \forall t \geq 0.
\label{eq:ComparisonPrinciple_FinalEstimate-exp}
\end{eqnarray}
\end{proposition}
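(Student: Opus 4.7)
The plan is to reduce the differential inequality to a monotonicity statement via an exponential change of variables, and then apply the standard fact that a continuous function whose right upper Dini derivative is everywhere nonpositive must be nonincreasing.

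First, I would set $z(t) := e^{-at} y(t)$ and compute its right upper Dini derivative. Writing
\[
\frac{z(t+h) - z(t)}{h} = e^{-a(t+h)}\frac{y(t+h)-y(t)}{h} + y(t)\frac{e^{-a(t+h)} - e^{-at}}{h},
\]
and taking $\Limsup_{h\to+0}$, the continuity of $t\mapsto e^{-at}$ together with the elementary limsup rules yields
\[
D^+ z(t) \leq e^{-at} D^+ y(t) - a e^{-at} y(t).
\]
Substituting the hypothesis \eqref{eq:ComparisonPrinciple-exp} into this bound, the $ay(t)$ terms cancel and one obtains $D^+ z(t) \leq M e^{-at}$ for every $t>0$.

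Next, I would absorb the forcing term by considering
\[
\psi(t) := z(t) + \frac{M}{a} e^{-at}.
\]
Since the Dini derivative of the explicit term $\frac{M}{a}e^{-at}$ equals $-Me^{-at}$, we get $D^+\psi(t) \leq 0$ for all $t>0$. The function $\psi$ is continuous on $\R_+$, so by the classical Dini-derivative monotonicity criterion (a continuous function with nonpositive right upper Dini derivative is nonincreasing), $\psi(t) \leq \psi(0)$ for all $t\geq 0$. Unfolding the definitions gives
\[
e^{-at} y(t) + \frac{M}{a} e^{-at} \leq y(0) + \frac{M}{a},
\]
and multiplying by $e^{at}$ yields exactly \eqref{eq:ComparisonPrinciple_FinalEstimate-exp}.

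The only delicate step is the justification of the inequality $D^+ z(t) \leq e^{-at} D^+ y(t) - a e^{-at} y(t)$, since the right upper Dini derivative is only sublinear (not linear) and need not satisfy a Leibniz rule in general. However, because the multiplier $e^{-a(t+h)}$ is continuous and positive and the perturbation $\tfrac{e^{-a(t+h)}-e^{-at}}{h}$ converges to the ordinary derivative $-ae^{-at}$ as $h\to 0^+$, the standard limsup arithmetic (sum rule for a convergent term plus a $\limsup$ term, and product with a nonnegative convergent factor) applies, and the bound follows. Everything else is routine algebra.
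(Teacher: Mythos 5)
Your proof is correct, but it follows a different route than the paper's. The paper first subtracts the particular solution, setting $z(t):=y(t)-\frac{M}{a}(e^{at}-1)$, which reduces the problem to the homogeneous inequality $D^+z(t)\le az(t)$; it then concludes $z(t)\le z(0)e^{at}$ by invoking an external nonlinear comparison lemma (from the cited reference, with the comparison function $\eta=\ln$). You instead apply the integrating factor first, $z(t):=e^{-at}y(t)$, absorb the forcing term into $\psi(t):=z(t)+\frac{M}{a}e^{-at}$, and reduce everything to the classical fact that a continuous function with nonpositive right upper Dini derivative is nonincreasing. Your version is more self-contained and elementary: the only nontrivial ingredients are the Dini-derivative sum/product rules (which you correctly justify — the multiplier $e^{-a(t+h)}$ is positive and convergent, the perturbation term has a genuine limit, and $D^+y(t)<+\infty$ by hypothesis, so no $\infty-\infty$ ambiguity arises) and the standard monotonicity criterion. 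It also quietly sidesteps a subtlety in the paper's argument, namely that the logarithmic comparison function used there is only defined for positive arguments, whereas your $\psi$ requires no sign restriction. Both approaches are, at bottom, the same integrating-factor idea executed in a different order, but yours trades an external citation for a textbook lemma.
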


\begin{proof}
Define $z(t):=y(t) - \frac{M}{a}(e^{at} - 1)$, $t\geq 0$.
Then 
\begin{align*}
D^+z(t) 
&= D^+y(t) - Me^{at}\\
&\le a\Big(z(t) + \frac{M}{a}(e^{at} - 1)\Big) + M - Me^{at}\\
&= az(t).
\end{align*}
Arguing as in \cite[The proof of Lemma~3.2, p. 464]{MiI16}, we obtain the counterpart of the estimate 
\cite[eq. (41)]{MiI16} for $z$:
\[
z(t) \leq \eta^{-1}(\eta(z(0))+at),
\]
with $h(s)=\ln(s)$, which implies that $z(t)\leq z(0)e^{at}$, and thus \eqref{eq:ComparisonPrinciple_FinalEstimate-exp} holds.
\end{proof}

\subsection{Lyapunov characterization of RFC}

We call a function $h: \R_+^3 \to \R_+$ increasing, if $(r_1,r_2,r_3) \leq (R_1,R_2,R_3)$
implies that $h(r_1,r_2,r_3) \leq h(R_1,R_2,R_3)$, where we use the component-wise
partial order on $\R_+^3$.

The regularity of Lyapunov functions, constructed via converse Lyapunov techniques, depends on the regularity of the flow map.
\begin{definition}
\label{def:Lipschitz-uniform-in-inputs}
Let $\Dc$ be the set of disturbances as defined by \eqref{eq:disturbance-set}.
We say that the flow of a control system $\Sigma=(X,\Uc,\phi)$ is \emph{Lipschitz continuous on compact intervals uniformly in inputs from $\Dc$}, if 
for any $\tau>0$ and any $r>0$, there exists $L = L(\tau,r)>0$ so that 
\begin{align}
x,y&\in B_r,\ t \in [0,\tau],\ u\in \Dc \nonumber\\
& \qrq \|\phi(t,x,u) - \phi(t,y,u) \|_X \leq L \|x-y\|_X.
\label{eq:Flow_is_Lipschitz-uniform-in-u}
\end{align}	
\end{definition}

We assume that the axiom of concatenation is valid for the inputs in $\Dc$. 
\begin{ass}[The axiom of concatenation]
\label{ass:Concatenation-in-D}
We suppose that for all $u_1,u_2 \in \Dc$ and for all $t>0$ the \emph{concatenation of $u_1$ and $u_2$ at time $t$}, defined by
\begin{equation}
\ccat{u_1}{u_2}{t}(\tau):=
\begin{cases}
u_1(\tau), & \text{ if } \tau \in [0,t], \\ 
u_2(\tau-t),  & \text{ otherwise},
\end{cases}
\label{eq:Composed_Input}
\end{equation}
belongs to $\Dc$.
\end{ass}

\begin{remark}
\label{rem:Concatenation-in-D} 
If $\Dc=\Uc$, Assumption~\ref{ass:Concatenation-in-D} is satisfied for most of the standard input spaces. 
If $\Dc$ is a bounded ball (i.e., $R$ in \eqref{eq:disturbance-set} is finite), Assumption~\ref{ass:Concatenation-in-D} becomes more restrictive as then the norm of the concatenation of two inputs from $\Dc$ cannot exceed the maximum of the norm of the concatenated inputs. 
In particular, Assumption~\ref{ass:Concatenation-in-D} is valid with $R<\infty$ if $U$ is a Banach space, and $\Uc = L^\infty(\R_+,U)$ (the space of essentially bounded strongly measurable $U$-valued functions), if $\Uc = C_b(\R_+,U)$ (the space of continuous bounded $U$-valued functions), or if $\Uc = PC_b(\R_+,U)$ (the space of piecewise continuous bounded $U$-valued functions). 
At the same time, Assumption~\ref{ass:Concatenation-in-D} is not valid with finite $R$ for $\Uc = L^p(\R_+,U)$, $p\in[1,+\infty)$ (the space of strongly measurable functions $u:\R_+\to U$ such that $s\mapsto \|u(s)\|_U^p$ is  Lebesgue integrable).
\end{remark}

Next, we characterize the RFC property in Lyapunov terms 

\begin{theorem}[Criteria for RFC property]
\label{thm:RFC-characterization} 
	Consider a control system $\Sigma=(X,\Uc,\phi)$.
Let $\Dc$ be the set of disturbances as defined by \eqref{eq:disturbance-set} satisfying Assumption~\ref{ass:Concatenation-in-D}.
Let $\Sigma$ satisfy the BIC property and have a flow that is Lipschitz continuous on compact intervals uniformly in inputs from $\Dc$. 

The following statements are equivalent:
\begin{enumerate}[label=(\roman*)]
	\item $\Sigma$ is robustly forward complete.
	\item  There exists a continuous, increasing function $\mu: \R_+^2 \to \R_+$, such that for
all $x\in X, u\in \Dc$ and all $t \geq 0$ we have
 \begin{equation}
    \label{eq:RFC}
    \| \phi(t,x,u) \|_X \leq \mu( \|x\|_X,t).
\end{equation}
	\item There exists a continuous function $\mu: \R_+^2 \to \R_+$ such that for
all $x\in X, u\in \Dc$ and all $t \geq 0$ the inequality \eqref{eq:RFC} holds.

	\item There are $\xi\in\Kinf$ and $c>0$, such that for all $t\geq 0,\ x \in X,\ u\in\Dc$
\begin{align}
\label{eq:RFC-formulation}
\|\phi(t,x,u)\|_X\leq \xi(\|x\|_X) + \xi(t) + c.
\end{align}
\item There are a Lipschitz continuous function $V:X\to\R_+$, maps $\psi_1,\psi_2 \in\Kinf$, and $C>0$ such that 
\begin{eqnarray}
\psi_1(\|x\|_X) \leq V(x) \leq \psi_2(\|x\|_X) + C,\quad x\in X,
\label{eq:RFC-sandwich-bounds}
\end{eqnarray}
and there are $a, M>0$, such that for all $x \in X$ and $u\in\Dc$, the following holds:
\begin{align}
\label{eq:RFC-dissipation}
\dot{V}_u(x) \leq a V(x) + M,
\end{align}
where $\dot{V}_u(x)$ denotes the right upper Dini derivative of the map $t \mapsto V(\phi(t,x,u))$ at $t=0$, i.e.,
\begin{equation}
\label{ISS_LyapAbleitung}
\dot{V}_u(x):=\Limsup \limits_{t \rightarrow +0} {\frac{1}{t}\big(V(\phi(t,x,u))-V(x)\big) }.
\end{equation}
\end{enumerate}
\end{theorem}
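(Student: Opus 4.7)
The plan is to close the chain (i) $\Rightarrow$ (ii) $\Rightarrow$ (iii) $\Rightarrow$ (iv) $\Rightarrow$ (v) $\Rightarrow$ (i). The implications among (i)--(iv) are largely bookkeeping. For (i) $\Rightarrow$ (ii), I would let $\mu_0(r,t) := \sup\{\|\phi(s,x,u)\|_X : \|x\|_X \leq r,\ s \in [0,t],\ u \in \Dc\}$, which is finite by RFC and non-decreasing in both arguments, and then dominate $\mu_0$ from above by a continuous non-decreasing majorant $\mu$. The implication (ii) $\Rightarrow$ (iii) is trivial. For (iii) $\Rightarrow$ (iv), I would pass to $\tilde\mu(r,t) := \sup_{s\leq r,\, \tau \leq t} \mu(s,\tau)$, use $\tilde\mu(r,t) \leq \tilde\mu(r+t,r+t)$, and bound this continuous increasing function of a single variable by $\xi(r)+\xi(t)+c$ for suitable $\xi \in \Kinf$, $c > 0$.

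The implication (v) $\Rightarrow$ (i) is short: I would apply Proposition \ref{prop:ComparisonPrinciple-exp} to $y(t) := V(\phi(t,x,u))$ together with the dissipation inequality \eqref{eq:RFC-dissipation}, obtaining $V(\phi(t,x,u)) \leq V(x)e^{at} + (M/a)(e^{at}-1)$. Combining with the sandwich bound \eqref{eq:RFC-sandwich-bounds} yields $\|\phi(t,x,u)\|_X \leq \psi_1^{-1}((\psi_2(\|x\|_X)+C)e^{at} + (M/a)(e^{at}-1))$, a continuous bound on trajectories, which together with the BIC property excludes finite escape time and delivers the uniform bounds demanded by RFC.

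The heart of the proof is the converse construction (iv) $\Rightarrow$ (v). My plan is to pick $\alpha \in \Kinf$ with unit Lipschitz constant (via Lemma \ref{lem:KinfLipschitzLowerEstimate}) satisfying $\alpha(3\xi(r)) \leq r$ for all $r \geq 0$ (first taking $\min\{\id,\, \xi^{-1}(\cdot/3)\}$ and then smoothing it by Lemma \ref{lem:KinfLipschitzLowerEstimate}), and to define $V(x) := \sup_{t \geq 0,\, u \in \Dc}\alpha(\|\phi(t,x,u)\|_X)\,e^{-t}$. Finiteness and the upper sandwich bound $V(x) \leq \psi_2(\|x\|_X) + C$ would follow by combining (iv) with the elementary estimate $\alpha(a+b+d) \leq \alpha(3a)+\alpha(3b)+\alpha(3d)$, yielding $\alpha(\|\phi(t,x,u)\|_X) \leq \|x\|_X + t + \alpha(3c)$, and then maximizing $(\|x\|_X + t + \alpha(3c))e^{-t}$ in $t$. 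The lower bound $V(x) \geq \alpha(\|x\|_X)$ comes from evaluating the sup at $t=0$. For the dissipation inequality, I would use Assumption \ref{ass:Concatenation-in-D} and the cocycle property to substitute $s = t+h$ in the defining supremum for $V(\phi(h,x,u))$ and enlarge the set of concatenated inputs to all of $\Dc$, yielding $V(\phi(h,x,u)) \leq e^h V(x)$; passing to the upper Dini derivative then gives \eqref{eq:RFC-dissipation} with $a = 1$, $M = 0$.

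The step I expect to be most delicate is verifying local Lipschitz continuity of $V$ on bounded balls. Unit Lipschitz continuity of $\alpha$ together with Lemma \ref{lem:SimpleIneqLemma} reduces $|V(x)-V(y)|$ to $\sup_{t,u}\min\bigl(L(t,r)\|x-y\|_X,\ 2(r+t+\alpha(3c))\bigr)e^{-t}$, where $L(t,r)$ comes from Definition \ref{def:Lipschitz-uniform-in-inputs}. The exponential weight must simultaneously dominate the (possibly rapid) growth of $L(t,r)$ for small $t$-contributions and suppress the envelope $2(r+t+\alpha(3c))$ for large $t$; balancing a cutoff $T$ between these two regimes and exploiting that $\alpha$ has been chosen slowly growing and unit Lipschitz is where the technical care resides. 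This is also the point at which the paper diverges from the ODE-based proof of \cite{AnS99}, since no backward-in-time flow is available to directly certify Lipschitz regularity of $V$.
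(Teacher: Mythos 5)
Your chain of implications and your treatments of (i)$\Rightarrow$(ii)$\Rightarrow$(iii)$\Rightarrow$(iv) and of (v)$\Rightarrow$(i) match the paper (the latter is exactly the comparison-principle-plus-BIC argument). The gap is in the heart of the proof, (iv)$\Rightarrow$(v): the single-supremum candidate $V(x)=\sup_{t\ge 0,\,u\in\Dc}\alpha(\|\phi(t,x,u)\|_X)e^{-t}$ is in general \emph{not} Lipschitz on bounded balls, and the balancing argument you sketch does not rescue it. For $x,y\in B_r$ you correctly reduce the difference to $\sup_{t,u}\min\bigl(L(t,r)\|x-y\|_X,\;2(r+t+\alpha(3c))\bigr)e^{-t}$, and splitting at a cutoff $T$ gives $|V(x)-V(y)|\le\max\bigl(L(T,r)\|x-y\|_X,\;2(r+T+\alpha(3c))e^{-T}\bigr)$. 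The second term is small in absolute value but is not $O(\|x-y\|_X)$; to make it so you must let $T\to\infty$ as $\|x-y\|_X\to 0$ (at least like $\ln(1/\|x-y\|_X)$), and then $L(T,r)$ is out of control: Definition~\ref{def:Lipschitz-uniform-in-inputs} only guarantees that $L(\tau,r)$ is finite for each finite $\tau$, with no growth restriction in $\tau$, so $e^{-t}L(t,r)$ need not be bounded. Your argument yields continuity of $V$ (and a modulus of continuity depending on $L$), but not the Lipschitz property claimed in (v). Nothing in the choice of $\alpha$ (slow growth, unit Lipschitz constant) repairs this, since the obstruction lives entirely in the flow's Lipschitz constant.

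The paper's construction is designed precisely to circumvent this. It first builds truncated functionals $V_k(x):=\sup_{u\in\Dc}\sup_{t\ge 0}G_k\bigl(e^{-t}\rho(\tfrac13\|\phi(t,x,u)\|_X)\bigr)$ with $G_k(s)=\max\{s-\tfrac1k,0\}$. Because the quantity inside $G_k$ is bounded by $e^{-t}(R+t+\xi^{-1}(c))$ on $B_R$, there is a \emph{fixed} time $T(R,k)$ beyond which $G_k$ vanishes identically, so the supremum over $t$ is effectively over the compact interval $[0,T(R,k)]$, uniformly in $x,y\in B_R$; this is what turns the flow's Lipschitz property on compact intervals into a genuine Lipschitz bound $M(R,k)$ for $V_k$. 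The truncation destroys coercivity near $0$, which is then restored by summing $W=\sum_k 2^{-k}(1+M(k,k))^{-1}V_k$; the weights keep $W$ Lipschitz on balls (only finitely many terms have $M(R,k)>M(k,k)$), and Lemma~\ref{Converse_Lyapunov_Theorem_SimpleEstimate}(ii) controls the extra $\tfrac{e^h-1}{k}$ error that $G_k$ introduces into the dissipation estimate, yielding $\dot W(x)\le W(x)+C_2$, i.e.\ $M>0$ rather than your claimed $M=0$. You would need to adopt this truncation-and-summation device (or an equivalent one) to complete the step.
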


\begin{proof}
\textbf{(i) $\Iff$ (ii) $\Iff$ (iii).} This was shown in \cite[Lemma 2.12]{MiW19a}.

\textbf{(ii) $\Rightarrow$ (iv).} 
Define $\zeta(r):=\mu(r,r) + r$, $r\geq 0$. 
As $\mu$ is increasing and continuous, $\zeta$ is strictly increasing and continuous, and for all $x,u,t$, we have 
\begin{align*}
\mu( \|x\|_X,t) 
&\leq \mu( \|x\|_X, \|x\|_X) + \mu(t,t)  \\
&\leq \zeta(\|x\|_X)  + \zeta(t).
\end{align*}
Define $\xi(r):=\zeta(r) - \lim_{r\to +0}\zeta(r)$. 
Then $\xi\in\Kinf$, and \eqref{eq:RFC-formulation} holds with this $\xi$ and $c:=2\lim_{r\to +0}\zeta(r)$.


\textbf{(v) $\Rightarrow$ (i).} Pick any $x \in X$ and any $u\in\Dc$. 
As $\Sigma$ is a well-defined control system, there is a maximal time $t_m(x,u)$ such that $\phi(\cdot,x,u)$ is well-defined on $[0,t_m(x,u))$.
By the axiom of shift invariance, $u(\cdot+r)\in \Uc$ and $\|u\|_{\Uc} \geq \|u(\cdot+r)\|_{\Uc}$ for any $r \geq 0$. 
This shows that also $u(\cdot+r)\in \Dc$, and thus the set $\Dc$ is invariant w.r.t.\ time-shift of the input signal as well.
Thus, by \eqref{eq:RFC-dissipation}, for all $t\in [0,t_m(x,u))$ we have that 
\begin{align*}
 D^+V(\phi(t,x,u)) =\dot{V}_{u(\cdot+t)}(\phi(t,x,u)) \leq a V(\phi(t,x,u))+M.
\end{align*}
Employing Proposition~\ref{prop:ComparisonPrinciple-exp} for the continuous map 
\[
y(t):=V(\phi(t,x,u)),\quad t\in[0,t_m(x,u)),
\]
we obtain:
\[
V(\phi(t,x,u)) \leq e^{at}V(x)+ \frac{M}{a}(e^{at}-1),\quad  t\in[0,t_m(x,u)).
\]
Thanks to \eqref{eq:RFC-sandwich-bounds}, we have for all $t\in[0,t_m(x,u))$ that 
\begin{align*}
\psi_1(\|\phi(t,x,u)\|_X) \leq e^{at} \big(  \psi_2(\|x\|_X) + C\big)+ \frac{M}{a}(e^{at}-1),
\end{align*}
and thus 
\begin{align}
\label{eq:RFC-bound}
\|\phi(&t,x,u)\|_X \nonumber\\
&\leq \psi_1^{-1}\Big(e^{at} \big(  \psi_2(\|x\|_X) + C\big)+ \frac{M}{a}(e^{at}-1)\Big).
\end{align}
Now, if $t_m(x,u)$ is finite, the trajectory $\phi(\cdot,x,u)$ is uniformly bounded on $[0,t_m(x,u))$, and we obtain a contradiction to the BIC property. Hence, $t_m(x,u)=+\infty$, and \eqref{eq:RFC-bound} shows the robust forward completeness.


\textbf{(iv) $\Rightarrow$ (v).} This implication (converse Lyapunov result) will be proved in several steps. 

\textbf{Construction of \q{pre-Lyapunov functions} $V_k$.} Let $\xi\in\Kinf$ be as in (iv). Pick $\rho \in\Kinf$ such that $\rho\leq \xi^{-1}$ pointwise and $\rho$ is globally Lipschitz continuous with a unit Lipschitz constant. Such $\rho$ exists in view of Lemma~\ref{lem:KinfLipschitzLowerEstimate}.

For any $k\in \N$, consider $V_k:X\to \R_+$, defined for all $x\in X$ as follows:
\begin{align}
\label{eq:LF-RFC}
V_k(x):=\sup_{u \in \Dc}\sup_{t\geq 0}G_k\Big(e^{-t}\rho\Big(\frac{1}{3}\|\phi(t,x,u)\|_X\Big)\Big),
\end{align}
where $G_k$ are given by \eqref{eq:G_k}.

To upperestimate $V_k$, recall that for all $\alpha\in\Kinf$ and all $a,b,c\in\R_+$ it holds that
\begin{eqnarray}
\alpha(a+b+c) \leq \alpha(3a)+ \alpha(3b)+\alpha(3c).
\label{eq:triangle-for-Kinf-3}
\end{eqnarray}

Take any $x \in X$. Using in \eqref{eq:LF-RFC} the estimate \eqref{eq:RFC-formulation}, and the fact that $\rho\leq \xi^{-1}$ and $G_k(r)\leq r$ pointwise, we have:
\begin{align*}
V_k(x) \leq \sup_{u \in \Dc}\sup_{t\geq 0}e^{-t}\xi^{-1}\Big(\frac{1}{3}\Big(\xi(\|x\|_X)  + \xi(t) + c\Big)\Big).
\end{align*}
Applying \eqref{eq:triangle-for-Kinf-3} with $\alpha:=\xi^{-1}$, we obtain for all $x \in X$:
\begin{align}
\label{eq:Sandwich-BRS-LF-above}
V_k(x) &\leq \sup_{t\geq 0}e^{-t}\big(\|x\|_X + t+ \xi^{-1}(c) \big) \leq \|x\|_X + C,
\end{align}
for a certain constant $C>0$ (depending solely on $c$).


\textbf{Growth estimate \eqref{eq:RFC-dissipation} for $V_k$ with $a=1$.} 
Take any $v \in\Dc$, and any $h>0$. 
By the cocycle property, we have
\begin{align*}
V_k(&\phi(h,x,v)) \\
&= \sup_{u \in \Dc}\sup_{t\geq 0}G_k\Big(e^{-t}\rho\Big(\frac{1}{3}\|\phi(t,\phi(h,x,v),u)\|_X\Big)\Big)\\
&= \sup_{u \in \Dc}\sup_{t\geq 0}G_k\Big(e^{-t}\rho\Big(\frac{1}{3}\big\|\phi(t+h,x,\ccat{v}{u}{h})\big\|_X\Big)\Big).
\end{align*}
Here, the concatenation $\ccat{v}{u}{h}$ was defined in \eqref{eq:Composed_Input}.

Assumption~\ref{ass:Concatenation-in-D} ensures that $\ccat{v}{u}{h} \in \Dc$.
Thus, we only increase the right-hand side by taking the supremum over a larger space of inputs:
\begin{align*}
V_k&(\phi(h,x,v)) \\
&\leq \sup_{u \in \Dc}\sup_{t\geq 0}G_k\Big(e^{-t}\rho\Big(\frac{1}{3}\|\phi(t+h,x,u)\|_X\Big)\Big)\\
&= \sup_{u \in \Dc}\sup_{t\geq 0}G_k\Big(e^he^{-(t+h)}\rho\Big(\frac{1}{3}\|\phi(t+h,x,u)\|_X\Big)\Big).
\end{align*}
Applying Lemma~\ref{Converse_Lyapunov_Theorem_SimpleEstimate}(ii), we proceed to
\begin{align*}
V_k&(\phi(h,x,v)) \\
&\leq e^h\sup_{u \in \Dc}\sup_{t\geq 0}G_k\Big(e^{-(t+h)}\rho\Big(\frac{1}{3}\|\phi(t+h,x,u)\|_X\Big)\Big) + \frac{e^h-1}{k}\\
&\leq e^h V_k(x) + \frac{e^h-1}{k}.
\end{align*}
Thus, for all $v \in\Dc$, we have 
\begin{align*}
\dot{V}_{k,v}(x) 
&:=\limsup_{h\to+0}\frac{1}{h}\Big( V_k(\phi(h,x,v)) - V_k(x) \Big)\\ 
&\le \limsup_{h\to+0}\frac{1}{h}(e^h - 1)V_k(x)  + \lim_{h\to0}\frac{e^h-1}{kh} \\ 
&= V_k(x)+\frac{1}{k}.
\end{align*}


\textbf{Lipschitz continuity for $V_k$ on bounded balls.} 
Take any $r>0$.
Arguing as in \eqref{eq:Sandwich-BRS-LF-above}, we see that for all $x \in B_r$, all $u\in \Dc$, and all $t\geq 0$, it holds that 
\[
e^{-t}\rho\Big(\frac{1}{3}\|\phi(t,x,u)\|_X\Big)\leq e^{-t}\big(r + t+ \xi^{-1}(c) \big).
\]
Hence for any $k \in\N$, there is a time $T=T(r,k)$:
%
\begin{equation*}
t \geq T(r,k) \srs e^{-t}\rho\Big(\frac{1}{3}\|\phi(t,x,u)\|_X\Big) \leq \frac{1}{k},\ \  x \in B_r,\  u\in \Dc.   
\end{equation*}
Thus, the domain of maximization in the definition of $V_k$ has a finite length.
That is, for all $r >0$ and all $x \in B_r$, the function $V_k$ can be equivalently defined by
\begin{eqnarray*}
\hspace{-5mm}V_k(x)= \sup_{u \in \Dc}\sup_{t \in[0,T(r,k)]}G_k\Big(e^{-t}\rho\Big(\frac{1}{3}\|\phi(t,x,u)\|_X\Big)\Big).
\end{eqnarray*}

Now pick any $x,y \in B_r$, and consider
\begin{align*}
|V_k(x)-&V_k(y)|  \\
&= \Big|\sup_{u \in \Dc}\sup_{t \in[0,T(r,k)]}G_k\Big(e^{-t}\rho\Big(\frac{1}{3}\|\phi(t,x,u)\|_X\Big)\Big)  \\
&\qquad   -\sup_{u \in \Dc}\sup_{t \in[0,T(r,k)]}G_k\Big(e^{-t}\rho\Big(\frac{1}{3}\|\phi(t,y,u)\|_X\Big)\Big) \Big| 
\end{align*}

Using Lemma~\ref{lem:SimpleIneqLemma}, we proceed to 
\begin{align*}
|V_k(x)&-V_k(y)|  \\
&\leq \sup_{u \in \Dc}\sup_{t \in[0,T(r,k)]}\Big|G_k\Big(e^{-t}\rho\Big(\frac{1}{3}\|\phi(t,x,u)\|_X\Big)\Big)  \\
&\qquad\qquad\qquad   -G_k\Big(e^{-t}\rho\Big(\frac{1}{3}\|\phi(t,y,u)\|_X\Big)\Big) \Big|, 
\end{align*}
As $G_k$ is globally Lipschitz with unit Lipschitz constant, we continue the estimates as follows:
\begin{align*}
|V_k(x)-V_k(y)|  
&\leq \sup_{u \in \Dc}\sup_{t \in[0,T(r,k)]}e^{-t}\Big|\rho\Big(\frac{1}{3}\|\phi(t,x,u)\|_X\Big)\\
&\qquad\qquad\qquad\qquad -\rho\Big(\frac{1}{3}\|\phi(t,y,u)\|_X\Big) \Big|.
\end{align*}
As $\rho$ is also globally Lipschitz with unit Lipschitz constant, we proceed to
\begin{align*}
&|V_k(x)-V_k(y)|  \\
&\leq \frac{1}{3}\sup_{u \in \Dc}\sup_{t \in[0,T(r,k)]}e^{-t}\Big|\|\phi(t,x,u)\|_X - \|\phi(t,y,u)\|_X \Big|\\
&\leq \frac{1}{3}\sup_{u \in \Dc}\sup_{t \in[0,T(r,k)]}\big\|\phi(t,x,u) - \phi(t,y,u) \big\|_X.
\end{align*}
Since $\phi$ is Lipschitz continuous on compact intervals uniformly in inputs in $\Dc$, there is some $M=M(r,k)$, which we assume without loss of generality to be increasing with respect to both arguments, such that: 
\begin{align}
\label{eq:Lipschitz-for-V_k}
|V_k(x)-V_k(y)|  \leq M(r,k)\|x-y \|_X,\quad x,y \in B_r.
\end{align}


\textbf{Defining \q{RFC Lyapunov function}.} 
Setting in \eqref{eq:LF-RFC} $t:=0$, and using the identity axiom of $\Sigma$, we estimate $V_k$ from below for all $x \in X$ as
\begin{align*}
V_k(x) \geq G_k\Big(\rho\big(\frac{1}{3}\|\phi(0,x,0)\|_X\big)\Big) = G_k\circ \rho\big(\frac{1}{3}\|x\|_X\big).
\end{align*}
As, $G_k(r)>0$ for $r>\frac{1}{k}$, we have
\begin{eqnarray}
\rho\Big(\frac{1}{3}\|x\|_X\Big) >\frac{1}{k} \qrq V_k(x) >0.
\label{eq:Condition-for-V_k-positivity}
\end{eqnarray}
At the same time, if $\rho\big(\frac{1}{3}\|x\|_X\big) <\frac{1}{k}$, we do not have a coercive estimate from below for $V_k$. 
Motivated by \cite[p. 133]{KaJ11b}, and using the Lipschitz constants $M(r,k)$ from \eqref{eq:Lipschitz-for-V_k}, we define a Lyapunov function candidate $W: X\to \R_+$ by
\begin{eqnarray}
W (x) := \sum_{k=1}^{\infty} \frac{2^{-k}}{1+M(k,k)} V_k (x) \qquad \forall x \in X.
\label{eq:LF_ConverseLyapTheorem_integralType_W-LF}
\end{eqnarray}
We have 
\begin{align*}
\psi_1(\|x\|_X)&:=\sum_{k=1}^{\infty} \frac{2^{-k}}{1+M(k,k)} G_k\circ \rho\big(\frac{1}{3}\|x\|_X\big) \\
& \leq W (x) \leq  \|x\|_X + C, \quad x \in X.
\end{align*}
Clearly, $\psi_1(0)=0$. Since for each $x \neq 0$ there is some $k\in\N$ such that $\rho\big(\frac{1}{3}\|x\|_X\big) >\frac{1}{k} $, the condition \eqref{eq:Condition-for-V_k-positivity} ensures that $\psi_1(r)>0$ for $r>0$. 
Furthermore, for any $r,s\geq 0$ we have 
\begin{align*}
&|\psi_1(r)-\psi_1(s)| \\
&= \Big| \sum_{k=1}^{\infty} \frac{2^{-k}}{1{+}M(k,k)} G_k\circ \rho\big(\frac{1}{3}r\big) - \sum_{k=1}^{\infty} \frac{2^{-k}}{1{+}M(k,k)} G_k\circ \rho\big(\frac{1}{3}s\big) \Big| \\
&\leq  \sum_{k=1}^{\infty} \frac{2^{-k}}{1+M(k,k)} \Big|G_k\circ \rho\big(\frac{1}{3}r\big) - G_k\circ \rho\big(\frac{1}{3}s\big) \Big|. \end{align*}
As both $G_k$, $k\in\N$, and $\rho$ are globally Lipschitz with unit Lipschitz constant, we proceed to 
\begin{align*}
|\psi_1(r)-\psi_1(s)| 
&\leq \frac{1}{3} \sum_{k=1}^{\infty} \frac{2^{-k}}{1+M(k,k)} \big|r-s\big| \leq \frac{1}{3} \big|r-s\big|,
\end{align*}
which shows the global Lipschitz continuity of $\psi_1$. Finally, as $\rho$ is increasing to infinity, $\psi_1$ shares this property. 
Overall, $\psi_1\in\Kinf$.

Now pick any $r >0$ and any $x,y \in B_r$. Exploiting \eqref{eq:Lipschitz-for-V_k}, we have\begin{align*}
\big|W(x) - W (y)\big| &= \Big| \sum_{k=1}^{\infty} \frac{2^{-k}}{1+M(k,k)} \big(V_k (x) - V_k (y)\big)\Big| \\
&\leq  \sum_{k=1}^{\infty} \frac{2^{-k}M(r,k)}{1+M(k,k)} \|x-y\|_X\\
&\leq  \Big(1+ \sum_{k=1}^{[r] + 1} \frac{2^{-k}M(r,k)}{1+M(k,k)}\Big) \|x-y\|_X.
\end{align*}
This shows that $W$ is a Lyapunov function for $\Sigma$ in the sense of (v), which is Lipschitz continuous on bounded balls.

Differentiating $W$ along the trajectory, we obtain for any $x \in X$ and $u\in\Uc$:
\begin{align*}
\dot{W}_u (x) &\leq \sum_{k=1}^{\infty} \frac{2^{-k}}{1+M(k,k)} \dot{V}_k (x) \\
 &\leq   \sum_{k=1}^{\infty} \frac{2^{-k}}{1+M(k,k)} \Big(V_k(x) + \frac{1}{k}\Big) = W(x) + C_2,
\end{align*}
for a certain $C_2>0$.
\end{proof}

%
%
%

\section{Discussion}

\subsection{Relation to finite-dimensional results}

Having proved a characterization of robust forward completeness for a general class of infinite-dimensional control systems, it is of virtue to see how much it can tell us in the special case of ODE systems, and in particular, how Theorem~\ref{thm:RFC-characterization} relates to Lyapunov characterization of forward completeness derived in \cite[Theorem 2]{AnS99}.

Let $\Sigma$ be an ODE system
\begin{eqnarray}
\dot{x} = f(x,u),
\label{eq:ODE-AnS99-comparison}
\end{eqnarray}
where $x(t) \in X:=\R^n$, $u \in \Uc:=L^\infty(\R_+,\R^m)$, and the nonlinearity is as follows:
\begin{ass}
\label{ass:regularity-rhs-ODE}
$f$ is continuous on $\R^n\tm\R^m$ and is Lipschitz continuous in $x$ on bounded sets.
\end{ass}

This assumption ensures that for any initial condition $x \in X$ and any input $u\in\Uc$, the corresponding maximal solution (in the sense of Caratheodory) $\phi(\cdot,x,u)$ of \eqref{eq:ODE-AnS99-comparison} exists and is unique on a certain finite interval. 
Furthermore, $\Sigma:=(X,\Uc,\phi)$ is a well-defined control system with the BIC property, see \cite[Theorem 1.16, Proposition 1.20]{Mir23}. 

Let $R$ be finite, and let $\Dc$ be the set of disturbances as defined by \eqref{eq:disturbance-set}.

Recall that a map $f:\R^n \to \R_+$ is called \emph{proper} if the preimage of any compact subset of $\R_+$ is compact in $\R^n$.

For systems \eqref{eq:ODE-AnS99-comparison}, Theorem~\ref{thm:RFC-characterization} takes the form
\begin{proposition}
\label{prop:RFC-characterization-ODE}
Let Assumption~\ref{ass:regularity-rhs-ODE} hold. 
System \eqref{eq:ODE-AnS99-comparison} is forward complete if and only if there exist a proper Lipschitz continuous function $V:\R^n\to\R_+$ and $a,M>0$ such that the following exponential
growth condition holds:
\begin{align}
\label{eq:Exp-growth-ODE}
\dot{V}_u(x)\leq aV(x) + M,\quad  x \in\R^n,\ u \in\Dc.
\end{align}
\end{proposition}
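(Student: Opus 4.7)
The proposition is a corollary of Theorem~\ref{thm:RFC-characterization}, so the plan is to verify the abstract hypotheses in the ODE setting and then translate between the sandwich bounds \eqref{eq:RFC-sandwich-bounds} and the notion of a \emph{proper} Lipschitz function.

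First I would check the hypotheses of Theorem~\ref{thm:RFC-characterization}. The BIC property is classical under Assumption~\ref{ass:regularity-rhs-ODE} and is quoted from the reference cited below \eqref{eq:ODE-AnS99-comparison}. Since $\Dc$ is a closed ball in $L^\infty(\R_+,\R^m)$ and $R<\infty$, Remark~\ref{rem:Concatenation-in-D} gives Assumption~\ref{ass:Concatenation-in-D}. Lipschitz continuity of the flow on compact intervals, uniformly in inputs from $\Dc$, is a standard Gronwall argument: on any interval $[0,\tau]$ on which both $\phi(\cdot,x,u)$ and $\phi(\cdot,y,u)$ remain in a bounded set (which, for $\|x\|,\|y\|\leq r$ and $u\in\Dc$, holds by RFC in the \q{only if} direction, and by the bound \eqref{eq:RFC-bound} derived inside the proof of Theorem~\ref{thm:RFC-characterization} in the \q{if} direction), the local Lipschitz constant of $f$ in $x$ and the $L^\infty$ bound on $u$ provide \eqref{eq:Flow_is_Lipschitz-uniform-in-u}.

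For the \emph{only if} direction, I would invoke Theorem~\ref{thm:RFC-characterization}\,(i)$\Rightarrow$(v) to produce a Lipschitz continuous $V$ satisfying \eqref{eq:RFC-dissipation} and the sandwich bound $\psi_1(\|x\|)\leq V(x)\leq \psi_2(\|x\|)+C$ with $\psi_1,\psi_2\in\Kinf$. Properness of $V$ is immediate: $V^{-1}([0,M])\subset \{x\in\R^n : \|x\|\leq \psi_1^{-1}(M)\}$, which is bounded and closed, hence compact in $\R^n$. The growth condition \eqref{eq:Exp-growth-ODE} is precisely \eqref{eq:RFC-dissipation}.

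For the \emph{if} direction, I would reconstruct the sandwich bounds \eqref{eq:RFC-sandwich-bounds} from properness and Lipschitz continuity and then apply Theorem~\ref{thm:RFC-characterization}\,(v)$\Rightarrow$(i). The upper bound follows from Lipschitz continuity: if $V$ is globally Lipschitz with constant $L$, then $V(x)\leq V(0)+L\|x\|$, which gives $\psi_2(r):=Lr$ and $C:=V(0)$; if \q{Lipschitz continuous} is read as \q{locally Lipschitz}, the same kind of estimate gives an increasing upper envelope of the form $\psi_2(\|x\|)+C$ with $\psi_2\in\Kinf$. For the lower bound, set $m(r):=\inf_{\|y\|\geq r}V(y)$. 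By properness, $m$ is non-decreasing and $m(r)\to\infty$ as $r\to\infty$, and $m(\|x\|)\leq V(x)$ for all $x$. Lemma~\ref{lem:KinfLipschitzLowerEstimate} (or a direct construction of a continuous strictly increasing lower envelope) then yields $\psi_1\in\Kinf$ with $\psi_1\leq m$, so $\psi_1(\|x\|)\leq V(x)$.

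The only real obstacle is this translation between \q{proper} and the $\Kinf$ sandwich, which is a standard but nontrivial packaging lemma; everything else is either transcription of hypotheses or a direct appeal to Theorem~\ref{thm:RFC-characterization}.
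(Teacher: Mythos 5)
Your proposal follows the same route as the paper: both directions reduce to Theorem~\ref{thm:RFC-characterization}, with the only real work being the translation between \q{proper and Lipschitz} and the sandwich bounds \eqref{eq:RFC-sandwich-bounds}. The differences are in sourcing rather than substance: the paper cites \cite[Lemma 4.6]{MiW19a} for the uniform Lipschitz continuity of the flow and \cite[Corollary A.11]{Mir23} for the proper-versus-$\Kinf$ translation, where you argue both directly (Gronwall, and the envelopes $\sup_{|y|\le r}V(y)$ and $\inf_{\|y\|\ge r}V(y)$). Your observation that in the \q{if} direction the flow-Lipschitz hypothesis must be bootstrapped --- equivalently, that the implication (v)$\Rightarrow$(i) in Theorem~\ref{thm:RFC-characterization} never uses that hypothesis --- is a point the paper's one-line \q{$\Leftarrow$} glosses over, and is worth keeping.

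One step needs a patch. The lower envelope $m(r):=\inf_{\|y\|\geq r}V(y)$ need not admit a $\Kinf$ minorant: properness does not prevent $V$ from vanishing away from the origin. For instance, $V(x)=|x-x_0|$ with $x_0\neq 0$ is proper and Lipschitz and satisfies \eqref{eq:Exp-growth-ODE} for $f\equiv 0$, yet $m\equiv 0$ on $[0,|x_0|]$, so no strictly increasing continuous $\psi_1$ with $\psi_1(0)=0$ and $\psi_1\leq m$ exists --- indeed no $\psi_1\in\Kinf$ with $\psi_1(|x|)\leq V(x)$ exists at all. (Note also that Lemma~\ref{lem:KinfLipschitzLowerEstimate} takes a $\Kinf$ function as input, so it does not apply to $m$ directly; you need your \q{direct construction} branch in any case.) The repair is cheap: apply Theorem~\ref{thm:RFC-characterization}(v) to $\tilde{V}:=V+1$, which is still Lipschitz, still satisfies $\dot{\tilde{V}}_u\leq a\tilde{V}+M$, and has $\inf_{\|y\|\geq r}\tilde{V}(y)\geq 1$ for all $r$ while still tending to infinity, so a $\Kinf$ minorant now exists. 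Alternatively, observe that the only role of $\psi_1$ in the proof of (v)$\Rightarrow$(i) is to convert a bound on $V(\phi(t,x,u))$ into a bound on $\|\phi(t,x,u)\|_X$, which is exactly boundedness of sublevel sets, i.e.\ properness verbatim. The paper's appeal to \cite[Corollary A.11]{Mir23}, as transcribed in its proof, faces the same issue, so this is a refinement of a shared weak point rather than a divergence from the paper.
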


\begin{proof}
By \cite[Corollary A.11]{Mir23}, $V\in C(\R^n,\R_+)$ is proper if and only if there is $\psi_1\in\Kinf$, such that $V(x) \geq \psi_1(|x|)$ for all $x\in\R^n$.
Furthermore, $V(x) \leq \omega(|x|)$, where $\omega: r \mapsto\sup_{|y|\leq r}V(y)$ is a continuous nondecreasing function. 
Setting $\psi_2(r):=r+\omega(r) - \lim_{s\to +0}\omega(s)$, we obtain 
$V(x) \leq \psi_2(|x|) + \lim_{s\to +0}\omega(s)$.
Thus, $V$ is proper if and only if the \q{sandwich bounds} \eqref{eq:RFC-sandwich-bounds} hold.

In view of Assumption~\ref{ass:regularity-rhs-ODE}, \cite[Proposition 2.5]{LSW96} shows that \eqref{eq:ODE-AnS99-comparison} is forward complete if and only if \eqref{eq:ODE-AnS99-comparison} is RFC.

\q{$\Leftarrow$}. Follows from the above argument and Theorem~\ref{thm:RFC-characterization}.

\q{$\Rightarrow$}. As \eqref{eq:ODE-AnS99-comparison} is robustly forward complete and Assumption~\ref{ass:regularity-rhs-ODE} 
holds, \cite[Lemma 4.6]{MiW19a} ensures that the flow $\phi$ is Lipschitz continuous on compact intervals uniformly in inputs from $\Dc$. 
The rest follows from Theorem~\ref{thm:RFC-characterization}.
\end{proof}

Proposition~\ref{prop:RFC-characterization-ODE} is a version of \cite[Theorem 2]{AnS99}. 
Both results guarantee for a forward complete ODE system the existence of a Lyapunov function with at most exponential growth rate.
However, the Lyapunov function constructed in \cite[Theorem 2]{AnS99} satisfies \eqref{eq:Exp-growth-ODE} with $M=0$, while in our construction $M>0$.
Another difference is that in Proposition~\ref{prop:RFC-characterization-ODE} our Lyapunov function $V$ is Lipschitz continuous, while in 
\cite[Theorem 2]{AnS99} the existence of an infinitely differentiable Lyapunov function with the same properties is shown. 

Basically, the authors in \cite[Theorem 2]{AnS99} construct first a Lipschitz continuous Lyapunov functional and afterward apply the smoothing procedure based on results in \cite{LSW96}, which is developed specifically for ODE systems.
At the same time, in Proposition~\ref{prop:RFC-characterization-ODE}, it is not required that $f$ is Lipschitz continuous with respect to inputs, which is assumed in \cite{LSW96, AnS99}. Furthermore, the argument in \cite{AnS99} uses backward continuation of solutions for ODEs, which is not available for general infinite-dimensional systems.

\subsection{Evolution equations with Lipschitz nonlinearities}

Let us specialize Theorem~\ref{thm:RFC-characterization} to a particular class of infinite-dimensional systems that covers 
many important evolution PDEs with distributed inputs.

Assume that the state space $X$ is a Banach space, the set of input values $U$ is a normed
linear space, and the input functions belong to the space
$\Uc:=PC_b(\R_+,U)$ of globally bounded, piecewise continuous functions $u:\R_+ \to U$, which are right-continuous. The norm of $u \in \Uc$ is given by
$\|u\|_{\Uc}:=\sup_{t \geq 0}\|u(t)\|_U$.

Let $A$ be the generator of a strongly continuous semigroup $T$ of bounded linear operators on $X$.
 Consider the system
\begin{equation}
\label{InfiniteDim}
\dot{x}(t)=Ax(t)+f(x(t),u(t)), \quad t>0,
\end{equation}
where $x(0)\in X$, $u\in \Uc$, and the following assumption holds:
\begin{ass}
\label{ass:Rhs-properties}
The map $f:X \tm U \to X$ satisfies
\begin{enumerate}[label=(\roman*)]  
    \item $f$ is Lipschitz continuous on bounded
subsets of $X$, uniformly with respect to the second argument, i.e., for
all $r>0$, there is $L(r)>0$, such that for all $x,y \in B_r$ and all $v \in B_{r,U}$, it holds that
\begin{eqnarray}
\|f(x,v)-f(y,v)\|_X \leq L(r) \|x-y\|_X.
\label{eq:Lipschitz}
\end{eqnarray}
    \item $f(x,\cdot)$ is continuous for all $x \in X$.
\end{enumerate}
\end{ass}

We study mild solutions of \eqref{InfiniteDim}, i.e., solutions\linebreak $x:[0,\tau] \to X$ of the integral equation
\begin{align}
\label{InfiniteDim_Integral_Form}
x(t)=T(t)x(0) + \int_0^t T(t-s)f(x(s),u(s))ds,
\end{align}
belonging to the space of continuous functions $C([0,\tau],X)$ for some $\tau>0$.

Under such assumptions, for any initial condition $x \in X$ and any input $u\in\Uc$, there is a unique maximal (mild) solution
of \eqref{InfiniteDim}, which we denote by $\phi(\cdot,x,u)$. Moreover, the triple $(X,\Uc,\phi)$ is a control system in the sense of our definition satisfying BIC property. This follows from general results in \cite{Mir23d}.

For \eqref{InfiniteDim}, we can restate Theorem~\ref{thm:RFC-characterization} with assumptions on $f$ rather than on the properties of the flow $\phi$:
\begin{corollary}
\label{cor:RFC-characterization-EE} 
Let $f$ satisfy Assumption~\ref{ass:Rhs-properties}.
Then the assertions (i)--(v) of Theorem~\ref{thm:RFC-characterization} are equivalent. 
\end{corollary}

\begin{proof}
By \cite{Mir23d}, \eqref{InfiniteDim} is a control system satisfying BIC property. 
Assumption~\ref{ass:Concatenation-in-D} is evidently satisfied.

The assumption that $\phi$ is Lipschitz continuous on compact intervals uniformly in inputs from $\Dc$ was used in 
Theorem~\ref{thm:RFC-characterization} only for the implication (iv) $\Rightarrow$ (v). 
However, as shown in \cite[Section 3.5]{Mir23d}, if \eqref{InfiniteDim} satisfies the RFC property, then $\phi$ is Lipschitz continuous on compact intervals uniformly in inputs from $\Dc$, and hence the invocation of Theorem~\ref{thm:RFC-characterization} shows the claim.
\end{proof}

\subsection{Remarks on boundedness of reachability sets}

The following property is closely related to robust forward completeness and is frequently used in control theory:
\begin{definition}
\label{def:BRS}
\index{bounded reachability sets}
\index{BRS}
We say that \emph{$\Sigma=(X,\Uc,\phi)$ has bounded reachability sets (BRS)} if it is forward complete and for any $r>0$ 
and any $\tau>0$, it holds that 
\[
\sup\big\{
\|\phi(t,x,u)\|_X : \|x\|_X\leq r,\ \|u\|_{\Uc} \leq r,\ t \in [0,\tau]\big\} < \infty.
\]
\end{definition}

It is not hard to see that a control system $\Sigma$ has BRS if and only if it is robustly forward complete with respect to $\Dc$ defined in \eqref{eq:disturbance-set} for all $R<\infty$.
Thus, the BRS property is (in general) stronger than RFC with respect to $\Dc$ with a fixed finite $R$. At the same time, BRS is generally weaker than RFC with $\Dc=\Uc$.

It is reasonable to ask whether one can obtain a Lyapunov characterization of the BRS property as well. 
The following result proposes a natural candidate for a \q{BRS Lyapunov function}:


\begin{proposition}
\label{prop:BRS-Direct-Lyapunov-Theorem} 
Consider a control system $\Sigma=(X,\Uc,\phi)$ satisfying the BIC property.

Let there exist a continuous map $V:X\to\R_+$, maps $\psi_1,\psi_2 \in\Kinf$, and $C>0$ such that 
\begin{eqnarray}
\psi_1(\|x\|_X) \leq V(x) \leq \psi_2(\|x\|_X) + C,\quad x\in X,
\label{eq:BRS-sandwich-bounds}
\end{eqnarray}
and there are $a>0$ and $\gamma\in\Kinf$, such that for all $x \in X$ and $u\in\Uc$ the following holds:
\begin{align}
\label{eq:BRS-dissipation}
\|x\|_X \ge \gamma(\|u\|_{\Uc}) \qrq \dot{V}_u(x) \leq a V(x).
\end{align}
Then $\Sigma$ has bounded reachability sets.
\end{proposition}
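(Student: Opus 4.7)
The plan is as follows. Fix $C,\tau > 0$ and take arbitrary $x \in X$ with $\|x\|_X \leq C$ and $u \in \Uc$ with $\|u\|_\Uc \leq C$. First I would observe that, by the sandwich estimate \eqref{eq:BRS-sandwich-bounds}, if $V(y) > M_u := \psi_2(\gamma(\|u\|_\Uc)) + C$, then necessarily $\|y\|_X > \gamma(\|u\|_\Uc)$, so the dissipation inequality \eqref{eq:BRS-dissipation} fires and gives $\dot V_u(y) \leq a V(y)$. Combining this with the axiom of shift invariance (which yields $\|u(\cdot+t)\|_\Uc \leq \|u\|_\Uc$, hence $\gamma(\|u(\cdot+t)\|_\Uc) \leq \gamma(\|u\|_\Uc)$) and the cocycle property, one obtains $D^+ V(\phi(t,x,u)) = \dot V_{u(\cdot + t)}(\phi(t,x,u)) \leq a V(\phi(t,x,u))$ at every instant $t \in [0, t_m(x,u))$ where $V(\phi(t,x,u)) > M_u$.

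Next, I would establish the exponential bound $V(\phi(t,x,u)) \leq \max\{V(x), M_u\} e^{at}$ on the maximal existence interval $[0, t_m(x,u))$. Since $V$ is Lipschitz and $\phi(\cdot,x,u)$ is continuous, the map $y(t) := V(\phi(t,x,u))$ is continuous, so the set $S := \{t \in [0, t_m(x,u)) : y(t) > M_u\}$ is open and decomposes into at most countably many connected components. On each component $(\alpha, \beta)$ the differential inequality $D^+ y(t) \leq a y(t)$ holds throughout, and continuity forces $y(\alpha) = M_u$ whenever $\alpha > 0$ (whereas $y(0) = V(x)$ when $\alpha = 0$); applying Proposition~\ref{prop:ComparisonPrinciple-exp} with $M = 0$ on each component then yields $y(t) \leq \max\{V(x), M_u\} e^{at}$ on $S$, while the same bound is trivial on $S^c$ where $y(t) \leq M_u$.

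From here the proposition follows quickly. Using \eqref{eq:BRS-sandwich-bounds} together with $V(x) \leq \psi_2(C) + C$ and $M_u \leq \psi_2(\gamma(C)) + C$, one obtains the uniform estimate
\begin{equation*}
\|\phi(t,x,u)\|_X \leq \psi_1^{-1}\bigl((\psi_2(\max\{C,\gamma(C)\}) + C) e^{at}\bigr)
\end{equation*}
for all $x$ with $\|x\|_X\leq C$, all $u$ with $\|u\|_\Uc \leq C$, and all $t \in [0, t_m(x,u))$. If $t_m(x,u)$ were finite, this uniform boundedness on $[0, t_m(x,u))$ would contradict the BIC property, so $t_m(x,u) = +\infty$; specialising to $t \in [0,\tau]$ then yields the BRS bound.

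The main delicate point will be the sublevel-set/comparison argument in the second step: one must cleanly propagate the exponential estimate across the boundaries between the region $\{V > M_u\}$ (where the dissipation operates) and its complement (where only $V \leq M_u$ is available). This requires a careful use of continuity at the left endpoints of the components of $S$, but beyond that no new ideas are needed — once the exponential bound on $V$ is in hand, BRS is an immediate consequence of the sandwich estimate and the BIC property.
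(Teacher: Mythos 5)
Your proposal is correct and follows essentially the same route as the paper's proof: partition the time axis into the region where the dissipation inequality \eqref{eq:BRS-dissipation} is active and its complement, use continuity to pin down the value at the left endpoint of each connected component, apply the comparison principle there, and conclude via the sandwich bounds and the BIC property. The only (cosmetic) difference is that you define the active region as the superlevel set $\{t: V(\phi(t,x,u))>M_u\}$ of $V$ rather than, as in the paper, the set $\{t: \|\phi(t,x,u)\|_X>\gamma(\|u\|_{\Uc})\}$; both choices lead to the same estimate.
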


\begin{proof}
Pick any $x \in X$ and any $u\in\Uc$. As $\Sigma$ is a well-defined control system, there is a maximal time $t_m(x,u)$ such that $\phi(\cdot,x,u)$ is well-defined on $[0,t_m(x,u))$.

Take any finite $\tau \leq t_m(x,u)$, and define
\[
P:=\{t\in(0,\tau): \|\phi(t,x,u)\|_X > \gamma(\|u\|_{\Uc})\}.
\]
By the identity axiom, $\phi(0,x,u)=x$ for all $u$. Together with the definition of $P$, for $t \in[0,\tau)\backslash P$ we have
\begin{align}
\label{eq:BRS-bound-2}
\|\phi&(t,x,u)\|_X \leq \max\{\|x\|_X,\gamma(\|u\|_{\Uc})\}.
\end{align}
If $P\neq\emptyset$, take any $t \in P$, and consider the maximal (w.r.t. the set inclusion) open interval $I=(t_-,t_+) \subset P$, such that $t \in I$. 

By continuity of $\phi(\cdot,x,u)$, such an interval is well-defined, and either 
$\|\phi(t_-,x,u)\|_X = \gamma(\|u\|_{\Uc})$, 
or $t_-=0$, and by the identity axiom $\|\phi(t_-,x,u)\|_X=\|x\|_X$.

By the axiom of shift invariance, $\|u\|_{\Uc} \geq \|u(\cdot+r)\|_{\Uc}$ for any $r \geq 0$, and we have that 
\[
\|\phi(t,x,u)\|_X > \gamma(\|u(t+\cdot)\|_{\Uc}),\quad t\in I.
\]
By \eqref{eq:BRS-dissipation}, for all $t\in [t_-,t_+)$ we have that 
\begin{align*}
 D^+V(\phi(t,x,u)) =\dot{V}_{u(\cdot+t)}(\phi(t,x,u)) \leq a V(\phi(t,x,u)).
\end{align*}
Employing Proposition~\ref{prop:ComparisonPrinciple-exp} for the continuous map 
\[
y(t):=V(\phi(t,x,u)),\quad t\in[t_-,t_+),
\]
we obtain:
\[
V(\phi(t,x,u)) \leq e^{a(t-t_-)}V(\phi(t_-,x,u)),\quad  t\in[t_-,t_+).
\]
Thanks to \eqref{eq:BRS-sandwich-bounds}, we have for all $t\in[t_-,t_+)$ that 
\begin{align*}
\psi_1(\|\phi(t,x,u)\|_X) \leq e^{a(t-t_-)} \Big(  \psi_2(\|\phi(t_-,x,u)\|_X) + C\Big).
\end{align*}
Thus, for all $t\in P$
\begin{align}
\label{eq:BRS-bound-1}
\|\phi&(t,x,u)\|_X \nonumber\\
&\leq \psi_1^{-1}\Big( e^{a \tau} \Big(  \psi_2\big(\max\{\|x\|_X, \gamma(\|u\|_{\Uc})\}\big) + C\Big)\Big).
\end{align}
Together with \eqref{eq:BRS-bound-2}, this shows that the trajectory $\phi(\cdot,x,u)$ is uniformly bounded on $[0,\tau)$. If $t_m(x,u)$ is finite, taking $\tau:=t_m(x,u)$, we obtain a contradiction to the BIC property. Hence, $t_m(x,u)=+\infty$, and \eqref{eq:BRS-bound-2} and \eqref{eq:BRS-bound-1} show the BRS property.
\end{proof}

A possible approach to obtaining the converse Lyapunov theorem for the BRS property is to transform the control system 
$\Sigma$ with BRS into an RFC auxiliary system $\tilde{\Sigma}$ by using the state feedback $u(x)=d(t)k(x)$, where $d$ is understood as a disturbance belonging to the bounded closed ball of a fixed radius, and $k$ is a carefully chosen feedback law. 
Using RFC characterization (Theorem~\ref{thm:RFC-characterization}) shown in this work, one obtains the RFC Lyapunov function for the modified system $\tilde{\Sigma}$. It could be used to obtain a BRS Lyapunov function for the original system $\Sigma$. The modification method was successfully employed in \cite{SoW95} for the characterization of the ISS property for ODE systems and in \cite{AnS99} for the characterization of the so-called unboundedness observability property for ODE systems with outputs, as well as for the BRS property for ODE systems. 

Employing this method for the systems considered in this work raises several challenges.
One of them is that the abstract systems used in this work are defined in terms of the flow map. 
Thus, there is no trivial way to define explicitly the modified system that will be obtained after adding feedback.
Infinite-dimensionality adds additional complexities as a question appears, whether such a feedback makes an auxiliary closed-loop system well-posed. 
These interesting problems are left for future research.

\subsection*{Acknowledgements}

The author thanks Iasson Karafyllis for his insightful comments and enlightening discussions on an early version of this paper 
as well as anonymous reviewers for their helpful comments on the first version of this work.

\bibliographystyle{abbrv}



\end{document}

--------------------